\documentclass[12pt]{article}   	
\usepackage{geometry}                		
\geometry{letterpaper}                   		
\usepackage{graphicx}				
\usepackage{caption}
\usepackage{subcaption}
\usepackage{amssymb}
\usepackage{amsthm}
\usepackage{csquotes}
\usepackage{amsmath}
\usepackage{mathtools}
\usepackage{float}
\usepackage{physics}
\MakeOuterQuote{"}
\newtheorem*{theorem*}{Theorem}
\newtheorem{definition}{Definition}
\newtheorem{theorem}{Theorem}[section]
\newtheorem{proposition}[theorem]{Proposition}
\newtheorem{lemma}[theorem]{Lemma}

\newtheorem*{conjecture*}{Conjecture}
\title{Stationary Solutions of the Curvature Preserving Flow on Space Curves}
\author{Matei P. Coiculescu}
\begin{document}
\maketitle
\begin{abstract}
We study a geometric flow on curves, immersed in $\mathbb{R}^3$, that have strictly positive torsion. The evolution equation is given by
$$X_{t}=\frac{1}{\sqrt{\tau}} \textbf{B}$$
where $\tau$ is the torsion and $\textbf{B}$ is the unit binormal vector. In the case of constant curvature, we find all of the stationary solutions and linearize the PDE for torsion around stationary solutions admitting an explicit formula. Afterwards, we prove the $L^2(\mathbb{R})$ linear stability of the stationary solutions corresponding to helices with constant curvature and constant torsion.
\end{abstract}
\section{Introduction}
Substantial work has been done towards understanding geometric flows on curves immersed in Riemannian Manifolds. For example, the author of \cite{G} proves that the mean-curvature flow shrinks embedded curves in the plane to a point in finite time, becoming round in the limit. Also, it is found in \cite{H} that the vortex filament flow is equivalent to the non-linear Schr\"odinger equation, which enables the discovery of explicit soliton solutions. Recently, geometric evolutions that are integrable, in the sense of admitting a Hamiltonian structure, have also been of interest. The authors of \cite{I} and \cite{BSW} analyze the integrability of flows in Euclidean space and Riemannian Manifolds, respectively. In this note, we study the following geometric flow for curves in $\mathbb{R}^3$ with strictly positive torsion that preserves arc-length and curvature:
$$X_t=\frac{1}{\sqrt{\tau}}\textbf{B}.$$ 
Hydrodynamic and magnetodynamic motions related to this geometric evolution equation have been considered, as mentioned in \cite{SR}. The case when curvature is constant demonstrates a great deal of structure, as we shall soon see. After rescaling so that the curvature is identically $1$, the evolution equation for torsion is given by:
$$\tau_t = D_s\big(\tau^{-1/2}-\tau^{3/2}+D_s^2(\tau^{-1/2})\big).$$
This flow has been studied since at least the publication of \cite{SR}. The authors of \cite{SR} show that the above evolution equation, which they term the \textit{extended Dym equation}, is equivalent to the m$^2$KDV equation. In addition, they present auto-B\"acklund transformations and compute explicit soliton solutions. We hope to continue the investigation of the curvature-preserving geometric flow:
\begin{itemize}
\item We characterize the flow $X_t=\frac{1}{\sqrt{\tau}}\textbf{B}$ as the unique flow on space curves that is both curvature and arc-length preserving.
\item We provide another proof that this flow is equivalent to the m$^2$KDV equation, and by doing so, we find the first two conserved densities of the flow
$$\int \sqrt{\tau} ds \textrm{ and } \int \tau ds$$
which are the same as for the KDV equation. 
\item We find all stationary solutions to the geometric flow in the case of constant curvature, including a two-parameter family of explicit solutions.
\item We derive the linearization of the evolution equation for torsion around explicit stationary solutions, and prove $L^2(\mathbb{R})$ stability of the linearization in the case of constant torsion (or for helices).
\end{itemize}
A tedious calculation yields the fact that the evolution in the case of non-constant curvature is not integrable, even in the formal sense of \cite{MSS}. Although we do not pursue this here, it might be interesting to study the case of non-constant curvature, especially for curves with almost constant curvature.

We would like to thank Richard Schwartz and Benoit Pausader for helpful discussions about this topic. We would also like to thank Wolfgang Schief for pointing out his joint paper \cite{SR} with C. Roger.

\section{Preliminaries}
We recall the following standard computation:
\begin{lemma}[cf. \cite{BSW}, \cite{I}]
Let $\gamma(t)$ be a family of smooth curves immersed in $\mathbb{R}^3$ and let $X(s,t)$ be a parametrization of $\gamma(t)$ by arc-length. Consider the following geometric evolution equation:
\begin{equation}X_t = h_1 \textbf{T} +h_2 \textbf{N}+h_3 \textbf{B}\end{equation}
where $\{\textbf{T},\textbf{N},\textbf{B}\}$ is the Frenet-Serret Frame and where we denote the curvature and torsion by $\kappa$ and $\tau$, respectively. Let $h_1, h_2, h_3$ be arbitrary smooth functions of $\kappa$ and $\tau$ on $\gamma(t)$. If the evolution is also arc-length preserving, then the evolution equations of $\kappa$ and $\tau$ are
$$\begin{pmatrix} \kappa_t \\ \tau_t \end{pmatrix}=P\begin{pmatrix} h_3 \\ h_1 \end{pmatrix}$$
where $P$ is
$$\begin{pmatrix} -\tau D_s - D_s\tau & D_s^2\frac{1}{\kappa}D_s-\frac{\tau^2}{\kappa}D_s+D_s\kappa\\
D_s\frac{1}{\kappa}D_s^2-D_s\frac{\tau^2}{\kappa}+\kappa D_s& D_s(\frac{\tau}{\kappa^2}D_s+D_s\frac{\tau}{\kappa^2})D_s+\tau D_s+D_s\tau\end{pmatrix}.$$
\end{lemma}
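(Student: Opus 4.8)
The plan is to compute the time derivatives of $\kappa$ and $\tau$ directly from the Frenet-Serret apparatus under the assumed evolution. The fundamental objects are the frame $\{\mathbf{T},\mathbf{N},\mathbf{B}\}$ satisfying $D_s\mathbf{T}=\kappa\mathbf{N}$, $D_s\mathbf{N}=-\kappa\mathbf{T}+\tau\mathbf{B}$, $D_s\mathbf{B}=-\tau\mathbf{N}$, together with the velocity $X_t = h_1\mathbf{T}+h_2\mathbf{N}+h_3\mathbf{B}$. First I would derive the constraint imposed by the arc-length-preserving hypothesis. Since $X_s=\mathbf{T}$ and $s$ is arc-length, the condition that $|X_s|$ stay equal to $1$ under the flow is equivalent to $\langle X_{st},\mathbf{T}\rangle = 0$; expanding $X_{st}=D_s(h_1\mathbf{T}+h_2\mathbf{N}+h_3\mathbf{B})$ with the Frenet relations gives the scalar relation $\partial_s h_1 = \kappa h_2$, which I would record and use repeatedly to eliminate $h_2$ in favor of $h_1$ (this is precisely why the final operator $P$ acts on $(h_3,h_1)$ alone).

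Next I would compute $\mathbf{T}_t$. Using the commutation $X_{ts}=X_{st}$ (legitimate once arc-length is preserved, so that $\partial_t$ and $D_s=\partial_s$ commute to the needed order), I would write $\mathbf{T}_t = D_s X_t = D_s(h_1\mathbf{T}+h_2\mathbf{N}+h_3\mathbf{B})$ and expand via Frenet-Serret, then substitute $\partial_s h_1=\kappa h_2$ to simplify the tangential component. Because $\mathbf{T}$ has unit length, $\mathbf{T}_t$ is orthogonal to $\mathbf{T}$, so it has only $\mathbf{N}$ and $\mathbf{B}$ components; call them $\alpha$ and $\beta$, so $\mathbf{T}_t=\alpha\mathbf{N}+\beta\mathbf{B}$. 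From $\kappa=|\mathbf{T}_s|$ one gets $\kappa_t = \langle \mathbf{T}_{st},\mathbf{N}\rangle$, which after using $\mathbf{T}_{st}=D_s\mathbf{T}_t=D_s(\alpha\mathbf{N}+\beta\mathbf{B})$ yields $\kappa_t$ as a differential expression in $\alpha,\beta$, hence in $h_1,h_3$. This should reproduce the top row of $P$.

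The more delicate computation is $\tau_t$. The cleanest route is to find the evolution of the whole frame: having $\mathbf{T}_t=\alpha\mathbf{N}+\beta\mathbf{B}$, orthonormality forces $\mathbf{N}_t=-\alpha\mathbf{T}+\gamma\mathbf{B}$ and $\mathbf{B}_t=-\beta\mathbf{T}-\gamma\mathbf{N}$ for a single additional scalar $\gamma$, which I would determine by imposing the compatibility condition $D_s(\mathbf{N}_t)=\partial_t(D_s\mathbf{N})$ (equivalently the flatness/zero-curvature condition for the $t$- and $s$-connections on the frame). This compatibility simultaneously pins down $\gamma$ and produces the evolution equation $\tau_t = D_s\gamma + (\text{lower-order terms in }\alpha,\beta,\kappa,\tau)$, coming from differentiating $\tau=\langle D_s\mathbf{N},\mathbf{B}\rangle$ in time. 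Re-expressing $\alpha,\beta,\gamma$ back in terms of $h_1,h_3$ (using $\partial_s h_1=\kappa h_2$ throughout) and collecting terms into operator form gives the bottom row of $P$.

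The main obstacle is bookkeeping rather than conceptual: the entries of $P$ mix the operator $D_s$ with the non-constant coefficients $\kappa,\tau$ in a specific non-commuting order (note expressions like $D_s\frac{1}{\kappa}D_s^2$ and $D_s\frac{\tau}{\kappa^2}D_s+D_s\frac{\tau}{\kappa^2}D_s$), so I must be scrupulous about the order in which $D_s$ hits coefficient functions versus the argument, and about repeatedly applying the constraint $h_2=\kappa^{-1}\partial_s h_1$. The only genuine subtlety to justify carefully is the commutation of $\partial_t$ with $D_s$: because the parametrization is by arc-length and the flow preserves arc-length, the metric coefficient $|X_s|$ is time-independent, so $\partial_t$ and $\partial_s$ commute and $D_s=\partial_s$ throughout; I would state this explicitly before differentiating, since it is what makes the entire computation consistent and is exactly the place where the arc-length-preserving hypothesis enters.
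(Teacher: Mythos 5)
Your proposal is correct, and it is essentially the argument the paper itself relies on: the paper gives no proof of Lemma 2.1, recalling it as a standard computation from \cite{BSW} and \cite{I}, and your frame-evolution/zero-curvature derivation (constraint $\partial_s h_1=\kappa h_2$, then $\mathbf{T}_t=\alpha\mathbf{N}+\beta\mathbf{B}$, $\mathbf{N}_t=-\alpha\mathbf{T}+\gamma\mathbf{B}$, $\mathbf{B}_t=-\beta\mathbf{T}-\gamma\mathbf{N}$, then compatibility of $\partial_t$ and $D_s$ on the frame) is precisely that standard computation, and it does reproduce both rows of $P$. One small bookkeeping correction: $\gamma$ is determined not by the $\mathbf{N}$-compatibility alone but by the $\mathbf{B}$-component of the $\mathbf{T}$-compatibility, namely $\kappa\gamma=\tau\alpha+\beta_s$, after which the $\mathbf{B}$-component of the $\mathbf{N}$-compatibility gives $\tau_t=D_s\gamma+\kappa\beta$; substituting $\alpha=\kappa h_1+\partial_s h_2-\tau h_3$, $\beta=\tau h_2+\partial_s h_3$, and $h_2=\kappa^{-1}\partial_s h_1$ then yields exactly the stated operator entries.
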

The next theorem follows without difficulty.
\begin{theorem}
Up to a rescaling, a geometric evolution of curves immersed in $\mathbb{R}^3$, as in equation $(1)$, is both curvature and arc-length preserving if and only if its evolution evolution is equivalent to 
\begin{equation}
X_t = \frac{1}{\sqrt{\tau}}\textbf{B}.
\end{equation}
\end{theorem}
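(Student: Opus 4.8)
The plan is to read curvature preservation directly off Lemma 2.1 and solve the resulting constraints on $h_1$ and $h_3$, with the role of $h_2$ already fixed by arc-length preservation (which is why $h_2$ does not appear in $P$). Since the evolution preserves arc-length by hypothesis, the top row of $P$ gives
$$\kappa_t = (-\tau D_s - D_s\tau)h_3 + \Big(D_s^2\tfrac{1}{\kappa}D_s - \tfrac{\tau^2}{\kappa}D_s + D_s\kappa\Big)h_1,$$
and "curvature preserving" means this vanishes for every admissible curve. Writing $h_1 = G(\kappa,\tau)$ and $h_3 = F(\kappa,\tau)$ and substituting $D_s G = G_\kappa \kappa_s + G_\tau \tau_s$ (and similarly for $F$), the right-hand side becomes a differential polynomial in the jet variables $\kappa,\tau,\kappa_s,\tau_s,\kappa_{ss},\dots$, so its identical vanishing is equivalent to the vanishing of the coefficient of each monomial.

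First I would isolate the highest-order terms. The only contribution involving third $s$-derivatives is $D_s^2(\tfrac{1}{\kappa}D_s G)$, whose $\kappa_{sss}$- and $\tau_{sss}$-coefficients are $G_\kappa/\kappa$ and $G_\tau/\kappa$. Their vanishing forces $G_\kappa = G_\tau = 0$, so $h_1$ must be a constant $c$. This is the key reduction: once $h_1$ is constant, the two $h_1$-terms carrying derivatives drop out and $D_s(\kappa h_1) = c\kappa_s$, collapsing the equation to
$$\kappa_t = -2\tau F_s - \tau_s F + c\kappa_s = (c - 2\tau F_\kappa)\kappa_s - (2\tau F_\tau + F)\tau_s.$$

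Next I would match the coefficients of the independent quantities $\kappa_s$ and $\tau_s$, obtaining the system $2\tau F_\kappa = c$ and $2\tau F_\tau + F = 0$. The second equation integrates to $F = A(\kappa)\tau^{-1/2}$; inserting this into the first yields $A'(\kappa) = \tfrac{c}{2}\tau^{-1/2}$, an identity whose left side depends only on $\kappa$ and right side only on $\tau$, so it can hold only if $c = 0$ and $A$ is constant. Hence $h_1 = 0$, and arc-length preservation ($D_s h_1 = \kappa h_2$) gives $h_2 = \tfrac{1}{\kappa}D_s h_1 = 0$, while $h_3 = A_0\tau^{-1/2}$ for a constant $A_0$. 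Absorbing $A_0$ by a rescaling of time produces exactly $X_t = \tfrac{1}{\sqrt{\tau}}\textbf{B}$. For the converse I would substitute $h_1 = h_2 = 0$, $h_3 = \tau^{-1/2}$ back in and check $\kappa_t = -2\tau(\tau^{-1/2})_s - \tau_s\tau^{-1/2} = \tau^{-1/2}\tau_s - \tau^{-1/2}\tau_s = 0$, with arc-length preservation automatic since $h_1 \equiv 0$.

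The only genuinely nontrivial point, and the step I would be most careful about, is the passage in the first paragraph from "curvature preserving for all curves" to "the differential expression vanishes coefficientwise in the jet variables." This requires that $\kappa,\tau$ and all their $s$-derivatives can be prescribed independently at a point, which is exactly what the freedom to choose the initial curve provides; once this is granted, everything else is elementary separation of variables. A secondary, cosmetic point is interpreting "up to rescaling" as the time reparametrization that removes the constant $A_0$.
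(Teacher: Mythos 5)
Your proof is correct, but it takes a genuinely different route from the paper's at the one step that matters: the elimination of the tangential coefficient $h_1$. The paper disposes of $h_1$ with a geometric remark --- a tangential component merely reparametrizes the curve, so one may assume $h_1=0$ from the start --- after which arc-length preservation gives $h_2=0$ and the single equation $2\tau D_s h_3 + h_3 D_s\tau = 0$ is integrated along the curve to give $h_3 = c/\sqrt{\tau}$. You instead stay entirely inside the analytic framework of Lemma 2.1: writing $h_1=G(\kappa,\tau)$, $h_3=F(\kappa,\tau)$, you expand $\kappa_t$ as a differential polynomial in the jet variables, kill the $\kappa_{sss}$ and $\tau_{sss}$ coefficients to force $h_1$ constant, then match the coefficients of $\kappa_s$ and $\tau_s$ to get the system $2\tau F_\kappa = c$, $2\tau F_\tau + F = 0$, whose only consistent solution is $c=0$, $F=A_0\tau^{-1/2}$. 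The two routes buy different things. The paper's is shorter and records the geometric content (tangential flow is reparametrization), but it implicitly reads \emph{curvature preserving} modulo reparametrization, and its integration of $D_s(\log h_3)=D_s(\log\tau^{-1/2})$ along a single curve produces a constant that a priori could depend on the curve or on $\kappa$; pinning it down as a universal constant requires exactly the kind of coefficient matching you perform. Your route proves the sharper statement that pointwise preservation of $\kappa$ forces $h_1\equiv 0$ exactly --- consistent with the observation that $X_t=\textbf{T}$ preserves the image curve but not $\kappa$ as a function of $(s,t)$ --- at the cost of invoking realizability of arbitrary jets of $(\kappa,\tau)$ at a point, which you correctly identify as the crux and justify by the freedom in prescribing curvature and torsion (the fundamental theorem of space curves). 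Both treatments of $h_3$ agree in substance; yours is the more airtight version of the paper's argument, while the paper's is the more transparent geometrically.
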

\begin{proof}
Let $X_t$ be a curvature and arc-length preserving geometric flow. The tangential component of $X_t$ in equation $(1)$ provides no interesting geometric information; it amounts to a re-parametrization of the curve. Thus, we may assume that $h_1=0$, so, since $X_t$ is arc-length preserving, $h_2=0$ as well. Lemma 2.1 gives us that 
$$\kappa_t = -\tau D_s (h_3)-D_s(\tau h_3)=-2\tau D_s (h_3)-h_3D_s(\tau).$$
Since $X_t$ is curvature preserving, we must have $\kappa_t=0$, or 
$$D_s(\log{h_3})=D_s(\log{\tau^{-1/2}}).$$
Integrating, we see that 
$$h_3=\frac{c}{\sqrt{\tau}}$$
where $c$ is a constant. Therefore, up to a rescaling, the evolution $X_t$ must be precisely as in $(2)$.
\end{proof}
Unfortunately, the evolution in $(2)$ only makes sense if $\gamma(t)$ has strictly positive torsion (or strictly negative torsion, with the flow $X_t=\frac{1}{\sqrt{-\tau}}\textbf{B}$). This motivates the following definition.
\begin{definition}
We call a smooth curve immersed in $\mathbb{R}^3$ a positive curve and only if it has strictly positive torsion.
\end{definition}
Fortunately, there are many interesting positive curves. For example, some knots admit a parametrization with constant curvature and strictly positive torsion, and there exist closed curves with constant (positive) torsion. Henceforth, we will only consider positive curves. 

The partial differential equation governing the evolution of $\tau$ follows below:
\begin{lemma}
For the geometric flow given in equation $(2)$, the evolution equations of curvature and torsion are $\kappa_t =0$ and
\begin{equation}
\tau_t = \kappa D_s(\tau^{-1/2})+D_s\bigg(\frac{D_s^2(\tau^{-1/2})-\tau^{3/2}}{\kappa}\bigg)
\end{equation}
\end{lemma}
The equation for torsion is reminiscent of the Rosenau-Hyman family of equations, which are studied, inter alia, in \cite{HEER} and \cite{LW}. The authors of \cite{SR} call the evolution of the torsion, in the case of constant curvature, the \textit{extended Dym equation} for its relationship with the Dym equation (a rescaling and limiting process converts the evolution for torsion to the Dym equation). 
As discussed in \cite{BSW}, the condition for the flow $X_t$ from Lemma 2.1 to be the gradient of a functional is for the Frechet derivative of $(h_3,h_1)$ to be self-adjoint. In general, this does not occur for the flow under our consideration in equation $(2)$, so it cannot be integrable in the sense of admitting a Hamiltonian structure (Indeed, when $\kappa$ is not constant, it is not even formally integrable in the sense of \cite{MSS}). Nevertheless, the case of constant curvature exhibits a great deal of structure, which makes the study of the evolution equation in $(3)$ worthwhile.
\section{Constant Curvature}
When $\kappa$ is constant, we may rescale the curves $\gamma(t)$ so that $\kappa \equiv 1$. In this way, the evolution of torsion becomes
\begin{equation}
\tau_t = D_s\big(\tau^{-1/2}-\tau^{3/2}+D_s^2(\tau^{-1/2})\big)
\end{equation}
\subsection{Equivalence with the m$^2$KDV Equation}
We recall the notion of "equivalence" of two partial differential equations from \cite{CFA}:
\begin{definition}
Two partial differential equations are equivalent if one can be obtained from the other by a transformation involving the dependent variables or the introduction of a potential variable.
\end{definition}
The authors in \cite{CFA} discuss a general method of transforming quasilinear partial differential equation, such as the evolution of $\tau$ in $(4)$, to semi-linear equations. By applying their algorithm, we obtain another proof of the following theorem, first demonstrated in \cite{SR}.
\begin{theorem}[\cite{SR}]
The evolution equation for torsion, in the case of constant curvature, which is given by
$$\tau_t = D_s\big(\tau^{-1/2}-\tau^{3/2}+D_s^2(\tau^{-1/2})\big),$$
is equivalent to the m$^2$KDV equation. Thus, it is a completely integrable evolution equation.
\end{theorem}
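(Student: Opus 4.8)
The plan is to follow the hodograph/reciprocal-transformation method of \cite{CFA}. Since the evolution $(4)$ is already written in conservation form, $\tau_t = D_s J$ with flux $J = \tau^{-1/2} - \tau^{3/2} + D_s^2(\tau^{-1/2})$, the one-form $\omega = \tau\, ds + J\, dt$ is closed precisely because $\tau$ solves $(4)$. On the domain of positive curves, where $\tau > 0$, I would introduce a new independent variable $y$ by declaring $dy = \omega$, i.e. $y_s = \tau$ and $y_t = J$, keeping time $t$ as the second variable. This is the reciprocal transformation; its invertibility is guaranteed by $\tau>0$, so no information is lost and the construction is reversible.

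Next I would record the transformation of the differential operators. From $y_s = \tau$ one gets $D_s = \tau\, \partial_y$, and comparing the $t$-derivative at fixed $s$ versus fixed $y$ gives $\partial_t|_s = \partial_t|_y + J\, \partial_y$. With these rules in hand I would pass to a new dependent variable designed to remove the fractional and negative powers of $\tau$; the natural choice is $v = \tau^{-1/2}$ (equivalently $\sqrt\tau = 1/v$), since then the flux reads $J = v + D_s^2 v - v^{-3}$ and the conserved density $\tau = v^{-2}$ matches the density used to build $y$. Substituting $D_s = v^{-2}\partial_y$ into $(4)$, the outermost $D_s$ together with the two inner copies hidden in $D_s^2(\tau^{-1/2})$ produce, after repeated application of the non-constant-coefficient operator $v^{-2}\partial_y$, a constant-coefficient third derivative $v_{yyy}$ plus a cascade of lower-order terms; I expect these, up to a further elementary change of the dependent variable, to collapse to the cubic nonlinearity $v^2 v_y$ characteristic of the modified KDV hierarchy, yielding the m$^2$KDV equation.

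Finally, since the m$^2$KDV equation is completely integrable --- it possesses a Lax pair and an associated bi-Hamiltonian (recursion-operator) structure, hence infinitely many conservation laws --- and since "equivalence" in the sense of Definition 3.1 is by construction an invertible change of the dependent and (potential) independent variables, complete integrability transfers back to $(4)$. As a consistency check, pulling the m$^2$KDV conserved densities back through the reciprocal transformation should reproduce, as the lowest members, the densities $\sqrt\tau$ and $\tau$ already recorded in the introduction.

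The main obstacle is the bookkeeping in the second step. Because $D_s = v^{-2}\partial_y$ carries a non-constant coefficient, applying it three times generates many terms involving $v_y$, $v_{yy}$ and powers of $v$; the crux is to verify that every residual quasilinear term cancels and that what survives is \emph{exactly} the semilinear m$^2$KDV form rather than a gauge- or scaling-variant of it. A secondary point to check carefully is that the flux $J$, and hence $y_t$, is well defined on a region large enough for the change of variables to be a genuine diffeomorphism, which again rests on the strict positivity of $\tau$.
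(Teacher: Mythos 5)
Your overall strategy (a reciprocal transformation built from a conservation law, followed by a change of dependent variable) is the right family of techniques and is close in spirit to what the paper does, but your specific choice of conserved density is wrong, and the computation you defer (``I expect these \dots to collapse to the cubic nonlinearity'') in fact fails. Carrying out your substitution $y_s = \tau$, $y_t = J$, $D_s = \tau\,\partial_y = v^{-2}\partial_y$, $\partial_t|_s = \partial_t|_y + J\,\partial_y$ with $v=\tau^{-1/2}$ gives, after simplification,
$$v_t = D_y\Big(\frac{1}{4v^2}-\frac{3v^2}{4}+\frac{3v_y^2}{4v^4}-\frac{v_{yy}}{2v^3}\Big),$$
which is precisely equation $(5)$ of the paper with $s$ renamed $y$: your transformation maps the extended Dym equation back to itself. (This is the reciprocal invariance, i.e.\ the auto-B\"acklund structure, already noted in \cite{SR}; it is a genuine property of the flow, but it makes no progress toward m$^2$KDV.) In particular, the coefficient of the third derivative in the transformed equation is $-1/(2v^3)$, not a constant, so no elementary change of the dependent variable alone can render it semilinear; the quasilinear ``cascade'' you hope will cancel does not cancel.

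The paper's proof differs in exactly the point you got wrong: the reciprocal/hodograph step must be built on the conserved density $\sqrt{\tau}$, not $\tau$. Concretely, the paper first rewrites $(4)$ as the conservation law $(5)$ for $v=\sqrt{\tau}$, potentiates ($v=w_s$, so the new independent variable $\xi=w$ satisfies $\xi_s=\sqrt{\tau}$), performs the hodograph swap of $w$ and $s$, and then anti-potentiates; the resulting dependent variable $z=\eta_\xi=\tau^{-1/2}$ is the same as yours, but it is a function of $\xi$, not of your $y$, and that difference is decisive. Moreover, even after the correct transformation one does not land on m$^2$KDV directly: setting $z=e^u$ yields the Calogero--Degasperis--Fokas equation
$$u_t=u_{sss}-\frac{u_s^3}{2}+\frac{3u_s}{2}\big(e^{2u}+e^{-2u}\big),$$
and the proof is completed only by invoking the known Miura-type transformation between the CDF equation and the m$^2$KDV equation (see \cite{CFA}, \cite{HEER}). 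Your proposal omits this second, non-elementary step entirely. So there are two genuine gaps: the choice of density $\tau$ (which sends the equation back to itself rather than toward a semilinear form), and the missing CDF-to-m$^2$KDV link that no amount of bookkeeping in your second step can replace.
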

\begin{proof}
First, let $\tau=v^2$, so that $(4)$ becomes
$$
2vv_t=D_s\bigg(\frac{1}{v}-v^3+D_s^2\big(\frac{1}{v}\big)\bigg)=-\frac{v_s}{v^2}-3v^2v_s-\frac{v_{sss}}{v^2}-\frac{6v_s^3}{v^4}+\frac{6v_sv_{ss}}{v^3}$$
or, in a simpler form:
\begin{equation}
v_t=D_s\bigg(\frac{1}{4v^2}-\frac{3v^2}{4}+\frac{3v_s^2}{4v^4}-\frac{v_{ss}}{2v^3}\bigg)
\end{equation}
A potentiation, $v=w_s$ followed by a simple change of variables $(t\rightarrow -t/2)$ yields
\begin{equation}
w_t=-\frac{1}{2w_s^2}+\frac{3w_s^2}{2}-\frac{3w_{ss}^2}{2w_s^4}+\frac{w_{sss}}{w_s^3}.
\end{equation}
Equation $(6)$ is fecund territory for a pure hodograph transformation, as used, for example, in \cite{CFA}. Let $\tilde{t}=t$, $\xi=w(s,t)$, and $s=\eta(\xi,\tilde{t})$. The resulting equation, after a simple computation, is 
\begin{equation}
\eta_{\tilde{t}}=\eta_{\xi\xi\xi}-\frac{3\eta_{\xi\xi}^2}{2\eta_\xi}+\frac{\eta_\xi^3}{2}-\frac{3}{2\eta_\xi}.
\end{equation}
We rename the variables to the usual variables of space and time: $s$ and $t$; in addition, we anti-potentiate the equation by letting $\eta_s=z$. This makes equation $(7)$ equivalent to
\begin{equation}
z_t = z_{sss}-\frac{3}{2}\bigg(\frac{z_s^2}{z}\bigg)_s+\frac{3z^2z_s}{2}+\frac{3z_s}{2z^2}.
\end{equation}
Lastly, if we let $z=e^u$ and simplify, equation $(8)$ becomes
\begin{equation}
u_t=u_{sss}-\frac{u_s^3}{2}+\frac{3u_s}{2}\big(e^{2u}+e^{-2u}\big)
\end{equation}
which is nothing but the Calogero-Degasperis-Fokas equation, for a particular choice of parameters. The CDF equation is linearizable and has been studied since at least the publication of \cite{CD}. There is a Miura-type transformation between the CDF equation and the m$^2$KDV equation (see \cite{CFA}, \cite{HEER}), so the evolution equation for torsion, in the case of constant curvature, is equivalent to the m$^2$KDV equation, as desired.
\end{proof}
By Theorem 3.1, we are guaranteed long term existence for our geometric flow. In other words, if we begin with a sufficiently nice positive curve with constant curvature and begin evolving it according to $(4)$, then the torsion and all of its derivatives remain bounded and the torsion remains strictly positive for all time. 

Equations $(4)$ and $(5)$ give us the first two integrals of motion of this flow:
$$\int \sqrt{\tau} ds \textrm{ and } \int \tau ds.$$
The rest can be found by pulling back the m$^2$KDV invariants. These invariants were obtained in a different way by the authors of \cite{SR}.

\subsection{Stationary Solutions}
Helices, with constant curvature and constant torsion, are the obvious stationary solutions. In what follows, we find the rest. 
\begin{theorem}
The stationary solutions of 
$$\tau_t = D_s\big(\tau^{-1/2}-\tau^{3/2}+D_s^2(\tau^{-1/2})\big)$$
are given by the following integral formula
$$\int\frac{du}{\sqrt{C+2Au-u^2-u^{-2}}}=s+k$$
where $\tau(s)=u(s)^{-2}$ and where $A,k$ and $C$ are appropriate real constants. When $A=0$ we get an explicit formula for the solutions:
$$\tau(s)=\frac{2}{C\pm\sqrt{(-4+C^2)}\cdot\sin(2(s+k))},$$
with $k$ and  $C\geq 2$ real constants.
\end{theorem}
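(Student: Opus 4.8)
The plan is to recognize that stationarity collapses the fourth-order flow into an integrable second-order ODE for the reciprocal square root of the torsion. Setting $\tau_t = 0$ in the evolution equation forces the quantity $\tau^{-1/2} - \tau^{3/2} + D_s^2(\tau^{-1/2})$ to be constant in $s$; call this constant $A$. I would then introduce the substitution $u = \tau^{-1/2}$ (so that $\tau = u^{-2}$, $\tau^{3/2} = u^{-3}$, and $D_s^2(\tau^{-1/2}) = u_{ss}$), which turns the relation into the autonomous second-order equation
$$u_{ss} = A - u + u^{-3}.$$
This is precisely a Newtonian equation of motion for a particle in the potential $V(u) = -Au + \tfrac12 u^2 + \tfrac12 u^{-2}$, which immediately suggests exploiting the conserved energy.

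Next I would derive that first integral explicitly. Multiplying the ODE by $u_s$ and integrating once in $s$ gives
$$u_s^2 = C + 2Au - u^2 - u^{-2},$$
where $C$ is a second constant of integration. Separating variables and integrating produces exactly the claimed formula
$$\int \frac{du}{\sqrt{C + 2Au - u^2 - u^{-2}}} = s + k,$$
the sign ambiguity in $u_s = \pm\sqrt{\,\cdots\,}$ being absorbed into the orientation of $s$ and the constant $k$. Since each step is reversible on any interval where the radicand is positive, this relation characterizes all stationary torsions, subject to the positivity constraint $\tau > 0$ built into the class of positive curves.

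For the explicit case $A = 0$ I would pass to the variable $w = u^2 = \tau^{-1}$. Using $u_s^2 = w_s^2/(4w)$, the first integral $u_s^2 = C - u^2 - u^{-2}$ becomes
$$w_s^2 = -4w^2 + 4Cw - 4 = (C^2 - 4) - 4\left(w - \tfrac{C}{2}\right)^2.$$
After the shift $v = w - C/2$ this is the simple harmonic oscillator equation $v_s^2 = (C^2-4) - 4v^2$, whose bounded solutions are $v = \tfrac12\sqrt{C^2-4}\,\sin(2(s+k))$. Unwinding the substitutions via $w = C/2 + v$ and $\tau = w^{-1}$ yields
$$\tau(s) = \frac{2}{C \pm \sqrt{C^2 - 4}\,\sin(2(s+k))},$$
and the requirement that $\sqrt{C^2-4}$ be real and that the denominator remain positive (so $\tau$ is a genuine positive torsion) forces $C \geq 2$.

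I expect the only real subtlety to be bookkeeping rather than a deep obstacle: one must check that the reductions are genuine equivalences, so that no stationary solutions are lost when dividing by $u_s$ or extracting square roots, and confirm that the admissibility conditions on the constants — in particular $C \geq 2$ together with positivity of the radicand — are exactly what is needed for the resulting $\tau$ to correspond to an honest positive curve. Verifying that constant $u$ (the helices) appears as the degenerate equilibrium $A = u - u^{-3}$ of the second-order ODE provides a useful consistency check on the whole scheme.
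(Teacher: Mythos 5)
Your proposal is correct, and its skeleton matches the paper's proof: both substitute $u=\tau^{-1/2}$, integrate the outer $D_s$ once to get the autonomous second-order ODE $u_{ss}=A-u+u^{-3}$, pass to the first integral $u_s^2=C+2Au-u^2-u^{-2}$, and separate variables. The only cosmetic difference in that stage is how the first integral is produced: the paper uses reduction of order, writing $w(u)=D_s u$ so that $u_{ss}=w\,D_u w$ and integrating in $u$, whereas you multiply by $u_s$ and integrate in $s$ (the conserved-energy viewpoint); these are the same computation in different clothing. Where you genuinely diverge is the explicit $A=0$ case: the paper simply reports the solution of the separable equation $u_s=\sqrt{C-u^2-u^{-2}}$ "with the aid of Mathematica," while you derive it by hand via $w=u^2=\tau^{-1}$, using $u_s^2=w_s^2/(4w)$ to obtain $w_s^2=(C^2-4)-4\left(w-\tfrac{C}{2}\right)^2$, recognizing the harmonic-oscillator equation after the shift $v=w-C/2$, and unwinding. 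This buys two things the paper leaves implicit: a transparent reason why the reciprocal of the torsion is a shifted sinusoid of period $\pi$, and an honest derivation of the constraint $C\geq 2$ (reality of $\sqrt{C^2-4}$ together with positivity of the denominator, which also rules out $C\leq -2$ since there the denominator stays negative). Your closing consistency check --- that helices sit at the equilibrium $A=u-u^{-3}$ --- is a sensible addition not present in the paper.
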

\begin{proof}
Let $\tau=u^{-2}$, then, after integrating once, we must examine the following ordinary differential equation (where $A$ is a constant):
\begin{equation}
A=u-\frac{1}{u^3}+D_s^2(u)
\end{equation}
Since equation $(10)$ is autonomous, we may proceed with a reduction of order argument. Let $w(u)=D_s(u)$ so that $D_s^2(u)=wD_u(w)$ by the chain rule. This substitution gives us the first order equation:
\begin{equation}
wD_u(w)=A-u+\frac{1}{u^3}
\end{equation}
or
$$D_u(w^2)=2A-2u+\frac{2}{u^3}.$$
Integrating, we get
$$D_s(u)=w(u)=\sqrt{C+2Au-u^2-u^{-2}}$$
which is a separable differential equation. So, the stationary solutions of equation $(4)$ are given by the following integral formula:
\begin{equation}
\int\frac{du}{\sqrt{C+2Au-u^2-u^{-2}}}=s+k
\end{equation}
for appropriate constants $C, A, \textrm{ and } k$.
It would be pleasant to have explicit solutions, and this occurs in the case when $A=0$, which is more easily handled. Equation $(11)$ above becomes
$$D_s(u)=w(u)=\sqrt{C-u^2-u^{-2}}$$
which is a differential equation that can be solved with the aid of Mathematica or another computer algebra system. The result is
\begin{equation}
u(s)=\sqrt{\frac{C\pm\sqrt{(-4+C^2)}\cdot\sin(2(s+k))}{2}}
\end{equation}
Where $C,k$ are real constants and $C\geq 2$. The corresponding torsion is:
$$\tau(s)=\frac{2}{C\pm\sqrt{(-4+C^2)}\cdot\sin(2(s+k))}$$
\end{proof}
Integrating $\tau$ and $\kappa$ as above using the Frenet-Serret equations will yield the corresponding stationary curves, up to a choice of the initial Frenet-Serret frame and isometries of $\mathbb{R}^3$.  
\subsection{$L^2(\mathbb{R})$ Linear Stability of Helices}

First, we derive the linearization of the evolution for torsion around the stationary solutions corresponding to helices. The linearization of equation $(4)$ at any stationary solution $\tau_0$ is obtained by letting $\tau(s,t) = \tau_0(s,t)+\epsilon w(s,t)$, substituting into equation $(4)$, dividing by $\epsilon$ and then taking the limit as $\epsilon\rightarrow 0$. This is nothing more than the Gateaux derivative of our differential operator at $\tau_0$ in the direction of $w$. Alternatively, one may think of $w$ as the first-order approximation for solutions of $(4)$ near the stationary solution. We can perform this operation when $\tau_0$ is given by an explicit formula, but for brevity's sake, we only mention here the linearization around helices when $\tau_0$ is constant. A short calculation yields
\begin{proposition}
The linearization of the evolution equation $(4)$ around the stationary solutions of constant torsion is
\begin{equation}w_t+2w_s+\frac{1}{2}w_{sss}=0.\end{equation}
\end{proposition}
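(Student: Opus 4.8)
The plan is to compute the Gateaux (directional) derivative of the spatial operator on the right-hand side of $(4)$ at a constant background $\tau_0$, working straight from the definition. I would set $\tau(s,t)=\tau_0+\epsilon w(s,t)$ with $\tau_0$ a positive constant, substitute into $(4)$, expand each nonlinear ingredient to first order in $\epsilon$, divide by $\epsilon$, and send $\epsilon\to 0$. The operator is built from exactly three pieces: the pointwise powers $\tau^{-1/2}$ and $\tau^{3/2}$, together with the third-order contribution $D_s^2(\tau^{-1/2})$, all sitting under the outer $D_s$. So the whole computation reduces to linearizing these three pieces and then applying $D_s$ once.

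For the pointwise powers I would use $(\tau_0+\epsilon w)^p=\tau_0^p+p\,\tau_0^{\,p-1}\epsilon w+O(\epsilon^2)$, which produces first-order contributions $-\tfrac12\tau_0^{-3/2}w$ from $\tau^{-1/2}$ and $+\tfrac32\tau_0^{1/2}w$ from $\tau^{3/2}$ (the latter entering with a minus sign). For the third-order piece I would expand $\tau^{-1/2}$ first and \emph{then} apply $D_s^2$; the decisive simplification is that $\tau_0$ is constant, so $D_s^2(\tau_0^{-1/2})=0$ and the constant factor $\tau_0^{-3/2}$ commutes out of the derivative, leaving $-\tfrac12\tau_0^{-3/2}w_{ss}$ at first order. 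Collecting the three contributions inside the outer $D_s$ and using once more that $\tau_0$ is constant yields the constant-coefficient linear equation
\begin{equation*}
w_t=-\Big(\tfrac12\tau_0^{-3/2}+\tfrac32\tau_0^{1/2}\Big)w_s-\tfrac12\tau_0^{-3/2}w_{sss}.
\end{equation*}
Specializing to the normalized value $\tau_0=1$ (consistent with the rescaling $\kappa\equiv 1$) collapses the coefficients to $2$ and $\tfrac12$, which is precisely $(14)$.

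I do not expect a genuine analytic obstacle: the substitution is purely algebraic, and the constancy of $\tau_0$ annihilates every term that would otherwise carry $s$-derivatives of the background, which is exactly why the statement advertises a "short calculation." The one point that requires care is the bookkeeping for $D_s^2(\tau^{-1/2})$: one must linearize the composition $\tau\mapsto\tau^{-1/2}$ before differentiating in $s$ rather than afterward, and verify that no second-order-in-$\epsilon$ cross terms contaminate the linear part. Once these first-order coefficients are assembled correctly, the conclusion follows by simply applying $D_s$ and reading off the result.
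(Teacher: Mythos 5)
Your proposal is correct and is essentially the paper's own argument: the paper prescribes exactly this Gateaux-derivative procedure (set $\tau=\tau_0+\epsilon w$, substitute into $(4)$, divide by $\epsilon$, let $\epsilon\to 0$) in the paragraph preceding the proposition and leaves the ``short calculation'' implicit, which is precisely what you carried out. Your intermediate general formula, with coefficients $\tfrac{1}{2}\tau_0^{-3/2}+\tfrac{3}{2}\tau_0^{1/2}$ and $\tfrac{1}{2}\tau_0^{-3/2}$, is a useful refinement, as it makes explicit that the fixed coefficients $2$ and $\tfrac{1}{2}$ in $(14)$ correspond to the normalization $\tau_0=1$ (a point the paper glosses over, since the rescaling in the text fixes $\kappa\equiv 1$, not $\tau_0$).
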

In what follows, we show the $L^2(\mathbb{R})$ linear stability of the the constant torsion stationary solution. First we recall the definition of linear stability:
\begin{definition}
A stationary solution $\phi$ of a nonlinear PDE is called $L^2(\mathbb{R})$ linearly stable when $v = 0$ is a stable solution of the corresponding linearized PDE with respect to the $L^2(\mathbb{R})$ norm and whenever $v_{t=0}$ is in $L^2(\mathbb{R})$.
\end{definition}
To work towards this, we need to use test functions from the Schwartz Space $\mathcal{S}(\mathbb{R})$, so we first recall that 
$$\mathcal{S}(\mathbb{R}):=\{ f\in C^{\infty}(\mathbb{R}) \textrm{ s.t. } \sup_{x\in\mathbb{R}} (1+\abs{x})^N\abs{\partial^{\alpha}f(x)}<\infty, \textrm{ for all } N, \alpha \}.$$
Intuitively, functions in $\mathcal{S}(\mathbb{R})$ are smooth and rapidly decreasing. For more on distributions and the Schwartz Space, review \cite{F}.
The rest of this section is devoted to proving:
\begin{theorem}
Helices correspond to $L^2(\mathbb{R})$ linearly stable stationary solutions of $$\tau_t = D_s\big(\tau^{-1/2}-\tau^{3/2}+D_s^2(\tau^{-1/2})\big).$$
\end{theorem}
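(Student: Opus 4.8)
The strategy is to establish $L^2(\mathbb{R})$ stability of the zero solution for the linearized equation from Proposition 3.3, namely
\begin{equation*}
w_t + 2w_s + \tfrac{1}{2}w_{sss} = 0.
\end{equation*}
Since this is a constant-coefficient linear PDE, the natural tool is the Fourier transform in the spatial variable $s$. The plan is to transform the equation, solve the resulting ordinary differential equation in $t$ for each fixed frequency, and read off the fact that the spatial $L^2$ norm is conserved in time.

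First I would take the Fourier transform $\hat{w}(\xi,t) = \int_{\mathbb{R}} w(s,t)\, e^{-i\xi s}\, ds$ of both sides, using that $D_s$ becomes multiplication by $i\xi$. This converts the PDE into the decoupled family of ODEs
\begin{equation*}
\hat{w}_t = -\Bigl(2 i\xi + \tfrac{1}{2}(i\xi)^3\Bigr)\hat{w} = i\Bigl(\tfrac{1}{2}\xi^3 - 2\xi\Bigr)\hat{w},
\end{equation*}
whose solution is $\hat{w}(\xi,t) = \hat{w}(\xi,0)\, e^{\,i(\xi^3/2 - 2\xi)t}$. The crucial observation is that the exponent is purely imaginary, so the multiplier is a unimodular phase factor: $|\hat{w}(\xi,t)| = |\hat{w}(\xi,0)|$ for every $\xi$ and every $t$. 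This is exactly the dispersive (as opposed to dissipative or amplifying) character one expects from an odd-order constant-coefficient operator with real coefficients.

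From here I would invoke the Plancherel theorem, which gives $\|w(\cdot,t)\|_{L^2(\mathbb{R})} = (2\pi)^{-1/2}\|\hat{w}(\cdot,t)\|_{L^2(\mathbb{R})}$. Combining this with the pointwise identity $|\hat{w}(\xi,t)| = |\hat{w}(\xi,0)|$ yields the conservation law $\|w(\cdot,t)\|_{L^2(\mathbb{R})} = \|w(\cdot,0)\|_{L^2(\mathbb{R})}$ for all $t$. Given $\epsilon > 0$, choosing $\delta = \epsilon$ shows that if the initial perturbation has $L^2$ norm below $\delta$, then the solution stays below $\epsilon$ for all time, which is precisely the required $L^2(\mathbb{R})$ linear stability in the sense of Definition 3.3. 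To keep the manipulations rigorous I would first carry out the computation for $w_{t=0} \in \mathcal{S}(\mathbb{R})$, where differentiating under the integral sign and applying Plancherel are unambiguously justified, and then extend to general $L^2$ initial data by a density argument.

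The main obstacle is not the Fourier computation itself, which is routine, but ensuring the manipulations are legitimate and the notion of solution is well posed: one must confirm that the phase factor $e^{i(\xi^3/2 - 2\xi)t}$ defines a genuine solution operator preserving $L^2$ (a unitary group), and that the Schwartz-space computation indeed extends by density to all of $L^2(\mathbb{R})$. This is where invoking $\mathcal{S}(\mathbb{R})$ as a dense test-function space, together with the boundedness (in fact isometry) of the multiplier, does the real work.
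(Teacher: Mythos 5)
Your proposal is correct and follows essentially the same route as the paper: Fourier transform in $s$, observation that the resulting multiplier $e^{i(\xi^3/2-2\xi)t}$ (the paper's $F_t$, in a different Fourier convention) is unimodular, Plancherel to control $\|w(t)\|_2$ by $\|w_0\|_2$, and a density argument reducing general $L^2$ data to Schwartz-class data. The only cosmetic difference is that you record the exact conservation $\|w(t)\|_2=\|w_0\|_2$, whereas the paper states the bound as an inequality via $\|F_t\|_\infty=1$; both yield stability with $\delta=\epsilon$.
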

\begin{proof}
Helices correspond to the constant torsion stationary solutions, so we analyze the linearized PDE in (14):
$$w_t+2w_s+\frac{1}{2}w_{sss}=0.$$
Henceforth, $w_0(s)$ denotes the initial data and $w(s,t)$ denotes the respective solution to the above, linear PDE. Thus, to prove the theorem, it suffices to show: for every $\epsilon$, there exists a $\delta$ such that if $w_0\in L^2(\mathbb{R})$ and $\|w_0\|_2<\delta$, then $\|w(t)\|_2<\epsilon$ for all $t\geq0$. 

We follow the standard process of finding weak solutions to linear PDEs via the Fourier transform. Moreover, we know by the Plancherel Theorem that we can extend the Fourier transform by density and continuity from $\mathcal{S}(\mathbb{R})$ to an isomorphism on $L^2(\mathbb{R})$ with the same properties. Hence, it suffices to prove the desired stability result for initial data in $\mathcal{S}(\mathbb{R})$. 

Let $F_t(\xi)=e^{4 i \pi ^3 \xi^3 t-4 i \pi  \xi t}$. We notice that since $F_t$ is a bounded continuous function for all $t\geq 0$, it can be considered a tempered distribution (or a member of $\mathcal{S}'(\mathbb{R})$, the continuous linear functionals on $\mathcal{S}(\mathbb{R})$), so its inverse Fourier transform makes sense. 

Indeed, we can let $B_t(s)=\mathcal{F}^{-1}(F_t(\xi))\in \mathcal{S}'(\mathbb{R})$ and, again, we denote $w_0\in \mathcal{S}(\mathbb{R})$ to be our initial data. Let
$$w(t)=B_t \ast w_0$$
so that $w(t)$ is a $C^{\infty}$ function with at most polynomial growth for all of its derivatives (see \cite{F}). Moreover, $w(t)$ satisfies equation $(14)$ in the distributional sense, as can be checked by taking the Fourier Transform. Lastly, since the Fourier Transform is a unitary isomorphism, it follows that
$$\lim_{t\rightarrow 0} w(t) = w_0$$
in the distribution topology of $\mathcal{S}'(\mathbb{R})$. Hence, $w(t)$ is the weak solution to equation $(14)$ with initial data $w_0\in\mathcal{S}(\mathbb{R})$. In our final step, we use the Plancherel Theorem and the fact that $\mathcal{F}(B_t)=F_t$ is a continuous function with $\|F_t\|_\infty =1$ for all $t\geq0$ to get:
$$\|w(t)\|_2=\|B_t \ast w_0 \|_2 =\|F_t \cdot \mathcal{F}(w_0)\|_2\leq \|F_t\|_\infty \|\mathcal{F}(w_0)\|_2= \|\mathcal{F}(w_0)\|_2=\|w_0\|_2.$$
From the inequality above, the desired $L^2(\mathbb{R})$ stability for initial data in $\mathcal{S}(\mathbb{R})$ follows forthrightly.
\end{proof}
\subsection{Numerical Rendering of a Stationary Curve}
We provide here the figures obtained from a numerical integration of the Frenet-Serret equations on Mathematica for the following choice of torsion:
$$\tau_1(s)=\frac{2}{3+\sqrt{5}\sin(2s)}$$ 
\begin{figure}[H]
\centering
\includegraphics[width=0.5\textwidth]{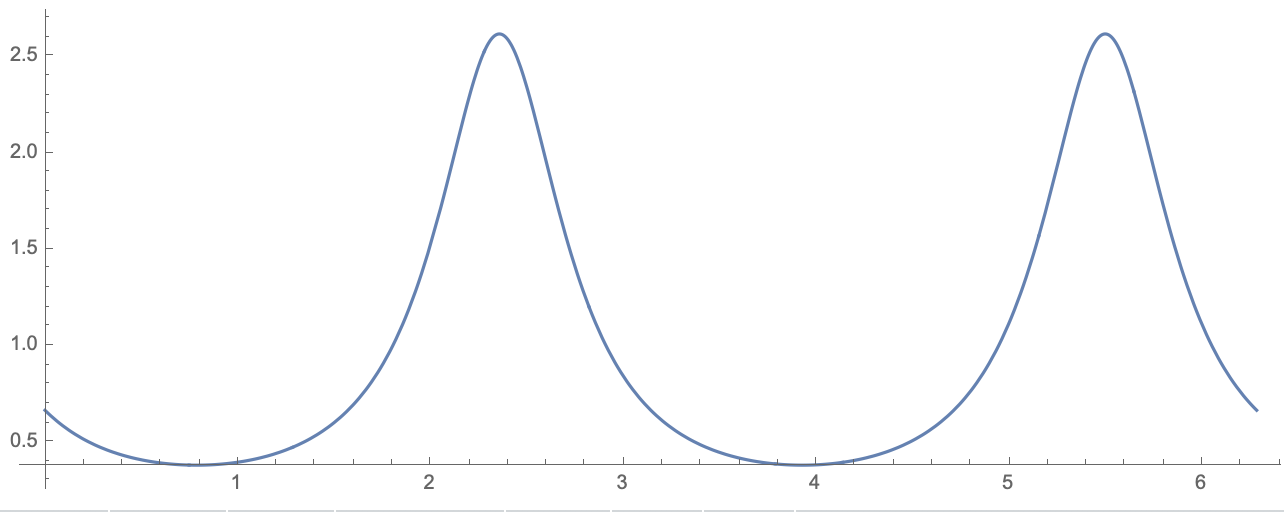}
\caption{This is the graph of the torsion $\tau_1$ over two periods.}
\end{figure}
\begin{figure}[H]
\centering
\includegraphics[width=0.5\textwidth]{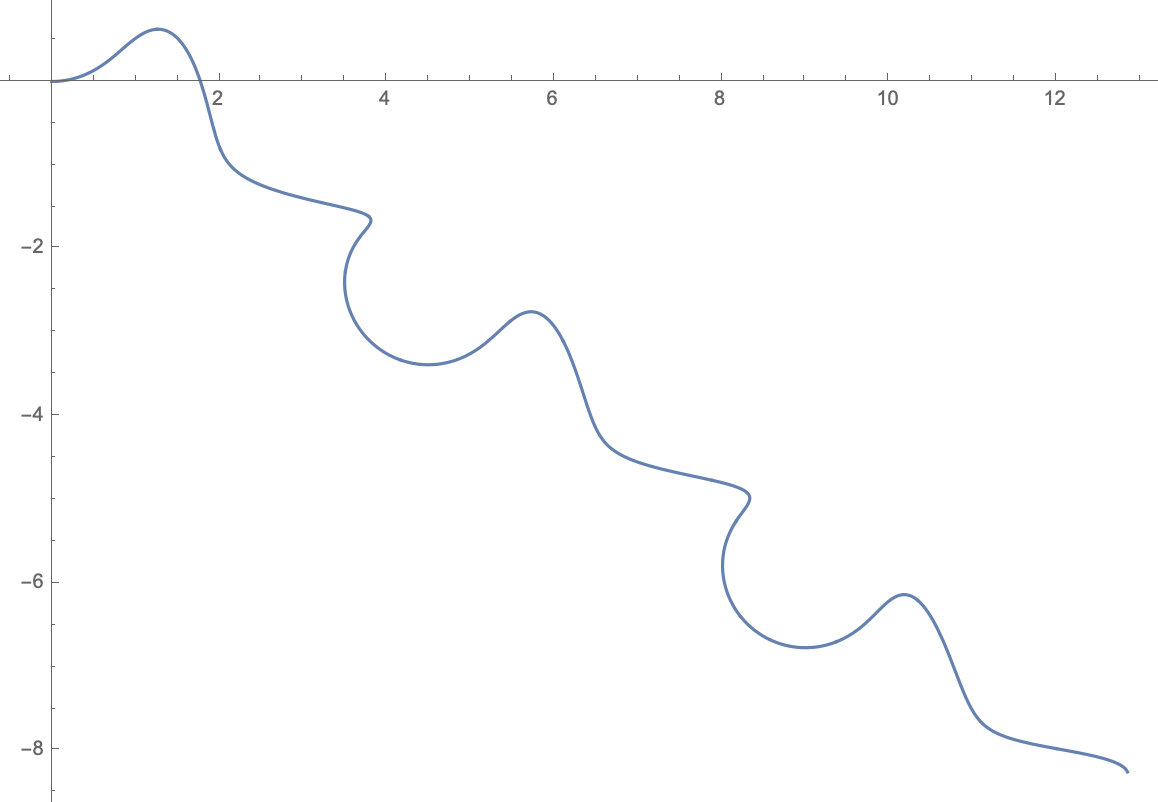}
\caption{This is the projection of the curve corresponding to $\tau_1$ into the $xy$ plane. The projections into the other planes look very similar.}
\end{figure}
\begin{figure}[H]
\centering
\includegraphics[width=0.5\textwidth]{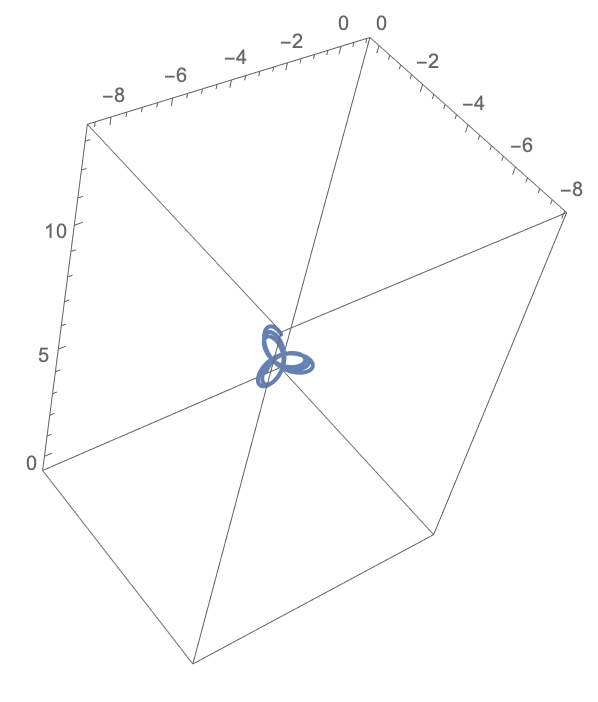}
\caption{This is a top-down view of the same curve, now exhibiting an almost trefoil shape.}
\end{figure}

\end{document}